\documentclass[12pt,a4paper,reqno]{amsart}
\usepackage{amsmath}
\allowdisplaybreaks
\usepackage{amsfonts}
\usepackage{amssymb,amsthm,amsfonts,amsthm,latexsym,enumerate,url,cases}
\numberwithin{equation}{section}
\usepackage{relsize}
\usepackage{mathrsfs}
\usepackage{hyperref}
\hypersetup{colorlinks=true,citecolor=blue,linkcolor=blue,urlcolor=blue}
\usepackage{extarrows}
     \addtolength{\textwidth}{3 truecm}
     \addtolength{\textheight}{1 truecm}
     \setlength{\voffset}{-.6 truecm}
     \setlength{\hoffset}{-1.3 truecm}
\usepackage{tikz,xcolor,hyperref}

\definecolor{lime}{HTML}{A6CE39}
\DeclareRobustCommand{\orcidicon}{
    \begin{tikzpicture}
    \draw[lime, fill=lime] (0,0)
    circle [radius=0.16]
    node[white] {{\fontfamily{qag}\selectfont \tiny ID}};
    \draw[white, fill=white] (-0.0625,0.095)
    circle [radius=0.007];
    \end{tikzpicture}
    \hspace{-2mm}
}
\foreach \x in {A, ..., Z}{\expandafter\xdef\csname orcid\x\endcsname{\noexpand\href{https://orcid.org/\csname orcidauthor\x\endcsname}
            {\noexpand\orcidicon}}
}

\newtheorem{theorem*}{Theorem}
\newtheorem{lemma*}{Lemma}
\theoremstyle{plain}

\newtheorem{theorem}{Theorem}
\newtheorem{lemma}[theorem]{Lemma}

\newtheorem{problem}{Problem}
\theoremstyle{definition}
\newtheorem*{acknowledgment}{Acknowledgments}
\newtheorem{remark}{Remark}

\makeatletter
\makeatother

\begin{document}

\title
[{Correct order on some certain weighted representation functions}] {Correct order on some certain weighted representation functions}

\author
[S.--Q. Chen \quad \& \quad Y. Ding* \quad \& \quad X. L\"u \quad \& \quad Y. Zhang] {Shi--Qiang Chen \quad \& \quad Yuchen Ding* \quad \& \quad Xiaodong L\"u \quad \& \quad Yuhan Zhang}

\address{(Shi--Qiang Chen) School of Mathematics and Statistics,  Anhui Normal University, Wuhu 241002, People's Republic of China}
\email{csq20180327@163.com}
\address{(Yuchen Ding) School of Mathematical Sciences,  Yangzhou University, Yangzhou 225002, People's Republic of China}
\email{ycding@yzu.edu.cn}
\address{(Xiaodong L\"u) School of Mathematical Sciences,  Yangzhou University, Yangzhou 225002, People's Republic of China}
\email{xdlv@yzu.edu.cn}
\address{(Yuhan Zhang) School of Mathematical Sciences,  Yangzhou University, Yangzhou 225002, People's Republic of China} \email{Qiaoyuan0804@hotmail.com}
\thanks{*Corresponding author}

\keywords{representation functions; order of functions; partitions of integers} \subjclass[2010]{Primary 11B34, Secondary 11A41}

\begin{abstract}
Let $\mathbb{N}$ be the set of all nonnegative integers. For any positive integer $k$ and any subset $A$ of nonnegative integers, let $r_{1,k}(A,n)$ be the number of solutions $(a_1,a_2)$ to the equation $n=a_1+ka_2$. In 2016, Qu proved that
$$\liminf_{n\rightarrow\infty}r_{1,k}(A,n)=\infty$$
providing that $r_{1,k}(A,n)=r_{1,k}(\mathbb{N}\setminus A,n)$ for all sufficiently large integers, which answered affirmatively a 2012 problem of Yang and Chen. In a very recent article, another Chen (the first named author) slightly improved Qu's result and obtained that
$$\liminf_{n\rightarrow\infty}\frac{r_{1,k}(A,n)}{\log n}>0.$$
In this note, we further improve the lower bound on $r_{1,k}(A,n)$ by showing that
$$\liminf_{n\rightarrow\infty}\frac{r_{1,k}(A,n)}{n}>0.$$
Our bound reflects the correct order of magnitude of the representation function $r_{1,k}(A,n)$ under the above restrictions due to the trivial fact that $r_{1,k}(A,n)\le n/k.$
\end{abstract}
\maketitle

\baselineskip 18pt

\section{Introduction}
Let $\mathbb{N}$ be the set of all nonnegative integers and $A$ a subset of $\mathbb{N}$. For any nonnegative integer $n$, let $R_1(A,n);R_2(A,n)$ and $R_3(A,n)$ be the number of solutions $(a,a')$ to the equations $n=a+a'$ with $a,a'\in A$; $a,a'\in A,~a<a'$ and $a,a'\in A,~a\le a'$, respectively. For backgrounds on these representation functions $R_i(A,n)$, $i=1,2,3$, one can refer to an early survey article of  S\'{a}rk\"{o}zy and S\'{o}s \cite{Sar-S}.
Following S\'{a}rk\"{o}zy's question, Dombi \cite{Dombi}, Chen and Wang \cite{Chen-Wang}, Lev \cite{Lev}, S\'{a}ndor \cite{Sandor}, Tang \cite{Tang}, Chen and Tang \cite{Chen-Tang} and Chen \cite{Chen} investigated various properties on values of the representation functions $R_i(A,n)$ and $R_i(\mathbb{N}\setminus A,n),i=1,2,3$.

In an interesting paper, Yang and Chen \cite{Yang-Chen} introduced the following weighted representation function
$$r_{k_1,k_2}(A,n)=\#\left\{(a_1,a_2)\in A^2: n=k_1a_1+k_2a_2\right\},$$
where $A$ is a subset of $\mathbb{N}$ and $k_1,k_2$ are two positive integers. They determined all pairs $(k_1, k_2)$ of positive integers for which there exists a set $A\subseteq \mathbb{N}$ such that
$$r_{k_1,k_2}(A,n)=r_{k_1,k_2}\left(\mathbb{N}\setminus A,n\right)$$
for all sufficiently large integers, which would reduce to partial answers to the original question of S\'{a}rk\"{o}zy mentioned above for $k_1=k_2=1$ on $R_1(A,n)$. For $1\le k_1<k_2$ with $(k_1,k_2)=1$, if there exists a set $A\subseteq \mathbb{N}$ such that
$$r_{k_1,k_2}(A,n)=r_{k_1,k_2}\left(\mathbb{N}\setminus A,n\right)$$
for all sufficiently large integers, then Yang and Chen proved that $k_1=1$. So the studies of the weighted representation function
$r_{1,k}(A,n)$ would be of particular interest.

For a positive integer $k\ge 2$, let $\Psi_k$ be the set of all $A\subseteq \mathbb{N}$ such that
$$r_{1,k}(A,n)=r_{1,k}\left(\mathbb{N}\setminus A,n\right)$$
for sufficiently large integers $n$. A result of Yang \cite{Yang} states that  if $k, \ell$ are
multiplicatively independent (equivalently, $\log k/\log \ell$ is irrational), then
$\Psi_k\cap\Psi_\ell=\emptyset$. Qu \cite{Qu} then gave a complete criteria for which
$\Psi_k\cap\Psi_\ell=\emptyset$.  It turns out to be that
$\Psi_k\cap\Psi_\ell\neq\emptyset$ if and only if $\log k/\log \ell=a/b$ for some odd positive integers $a$ and $b$, which disproved a conjecture of Yang \cite{Yang}. For related result, see also the article of Li and Ma \cite{LiMa}.
In \cite{Yang-Chen}, Yang and Chen posed the following problem:
\begin{problem}\label{problem1}
For any set $A\in \Psi_k$, is it true that $r_{1,k}(A, n)\ge1$
for all sufficiently large integers $n$? Is it true that $r_{1,k}(A, n)\rightarrow\infty$ as $n \rightarrow\infty$?
\end{problem}
Problem \ref{problem1} was later answered affirmatively by Qu \cite{Qu}. Very recently, Chen \cite{Chen1} improved Qu's result by showing that
$$\liminf_{n\rightarrow\infty}\frac{r_{1,k}(A,n)}{\log n}>0$$
for any $A\in \Psi_k$. In this note, we give the following very much stronger bound.

\begin{theorem}\label{thm1}
Let $k\ge 2$ be a given integer. For any set $A\in \Psi_k$, we have
$$\liminf_{n\rightarrow\infty}\frac{r_{1,k}(A,n)}{n}>0.$$
\end{theorem}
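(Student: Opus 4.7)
My plan is to convert the symmetry hypothesis into a self-similar structural description of $A$ via generating functions, and then to exploit this structure to produce $\Omega(n)$ representations of every large $n$. Set $\phi_A(z):=(1-z)f_A(z)$ with $f_A(z)=\sum_{a\in A}z^a$; its coefficients $\phi_n=\mathbb{1}_A(n)-\mathbb{1}_A(n-1)\in\{-1,0,1\}$ record the transitions of $\mathbb{1}_A$. The hypothesis $r_{1,k}(A,n)=r_{1,k}(\mathbb{N}\setminus A,n)$ for all large $n$ is equivalent to $f_A(z)f_A(z^k)-f_{\mathbb{N}\setminus A}(z)f_{\mathbb{N}\setminus A}(z^k)$ being a polynomial; combining this with $f_A+f_{\mathbb{N}\setminus A}=1/(1-z)$ and rearranging yields the key functional equation
$$\phi_A(z)+\phi_A(z^k)=Q(z),$$
where $Q$ is a polynomial of some degree $D$.

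Comparing coefficients, for every $n>D$ one has $\phi_n=0$ when $k\nmid n$ and $\phi_n=-\phi_{n/k}$ when $k\mid n$. Iterating, the support of $\phi_A$ outside a bounded initial segment lies in $\bigcup_{t\in T_0}\{k^s t:s\ge 0\}$ for a finite seed set $T_0\subset[1,D]$. Since $\phi_A$ records where $\mathbb{1}_A$ switches, this forces $A\cap[k^j,k^{j+1})$ to be a disjoint union of at most $|T_0|+1$ intervals for every large $j$; by pigeonhole, at least one of $A$ or $\mathbb{N}\setminus A$ contains an interval of length $\ge c_0 k^j$ there, for a constant $c_0>0$ depending only on $k$ and $|T_0|$. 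Denote by $\sigma_j\in\{A,\mathbb{N}\setminus A\}$ a choice of such a containing set at scale $j$.

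Given large $n$ with $s=\lfloor\log_k n\rfloor$, inspect the $|T_0|+2$ labels $\sigma_s,\sigma_{s-1},\ldots,\sigma_{s-|T_0|-1}$. Since each takes only two values, pigeonhole yields two indices $j_1>j_2$ in this window with $\sigma_{j_1}=\sigma_{j_2}=:\sigma$. Let $I\subset\sigma$ at scale $j_1$ and $J\subset\sigma$ at scale $j_2$ be the corresponding long intervals, with $|I|\ge c_0 k^{j_1}$ and $|J|\ge c_0 k^{j_2}$. Every $a_2\in J$ satisfying $n-ka_2\in I$ furnishes a representation $n=a_1+ka_2$ with both entries in $\sigma$. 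An elementary count on the Minkowski sum $I+kJ$ shows that whenever $n$ lies in $[\min I+k\min J,\max I+k\max J]$, the number of such $a_2$ is at least $\min(|I|/k,|J|)\ge c_0 k^{j_2}$, which is $\ge c n$ for a constant $c>0$ depending only on $A$ and $k$. Since $r_{1,k}(A,n)=r_{1,k}(\mathbb{N}\setminus A,n)$, this lower bound transfers to $r_{1,k}(A,n)$ regardless of which label $\sigma$ is realized.

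The principal obstacle is verifying this scheme applies \emph{uniformly} in $n\in[k^s,k^{s+1})$: as $n$ varies, the admissible same-sign pair $(j_1,j_2)$ may change, and one must confirm that the union of the resulting valid-$n$ intervals (which are essentially $[k^{j_1}+k^{j_2+1},k^{j_1+1}+k^{j_2+2}]$) covers $[k^s,k^{s+1})$ in every sign configuration. Because there are only $2^{|T_0|+2}$ possible sign patterns on the window of $|T_0|+2$ consecutive scales, this reduces to a finite case-check exhibiting, in each configuration, a choice of same-sign pair whose valid-$n$ interval contains $n$; the check is routine once one tabulates the pairs $(s,s-1)$, $(s-1,s-2)$, $(s,s-2)$, \ldots and matches them against the partition of $[k^s,k^{s+1})$ into dyadic-like sub-intervals. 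Completing this bookkeeping delivers $r_{1,k}(A,n)\ge cn$ for all sufficiently large $n$, proving the theorem.
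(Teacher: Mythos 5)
Your first half is sound and is essentially an independent derivation of the structural input the paper imports as Qu's Lemma 2.1: the functional equation $\phi_A(z)+\phi_A(z^k)=Q(z)$ and the resulting description of the transition set of $\mathbb{1}_A$ as $\bigcup_{t\in T_0}\{k^st\}$ (with alternating signs $\phi_{k^st}=(-1)^s\phi_t$) is exactly the self-similar block structure $t_{i+a}=kt_i$ with $a$ odd. Up to that point you are on the same track as the paper, just phrased through generating functions.

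The gap is in the counting half, and it is not the ``routine bookkeeping'' you defer. After extracting from each scale $[k^j,k^{j+1})$ only the datum ``there is a monochromatic interval of length $\ge c_0k^j$ of colour $\sigma_j$,'' you have discarded the \emph{positions} of these intervals, and the coverage claim then fails. Concretely, suppose $k\ge 5$ and every long interval $I_j$ happens to lie in $[k^j,2k^j)$. For $n=k^{s+1}-1$, any representation $n=a_1+ka_2$ with $a_1\in I_{j_1}$, $a_2\in I_{j_2}$ forces $j_2\le s-1$ (since $j_2=s$ gives $ka_2\ge k^{s+1}>n$) and $j_1\le s$ (since $j_1=s+1$ gives $a_1>n$), whence $a_1+ka_2<2k^s+2k^s=4k^s<n$: no pair of long intervals reaches $n$ at all, for any sign pattern. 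So the union of your valid-$n$ windows genuinely need not cover $[k^s,k^{s+1})$, and no finite case-check over the $2^{|T_0|+2}$ colourings can repair this, because the obstruction is positional, not combinatorial. (A smaller issue of the same flavour: at the endpoints of $[\min I+k\min J,\max I+k\max J]$ the fibre count degenerates to $1$, not $\min(|I|/k,|J|)$.) The paper avoids this by never decoupling the two summands: writing $n=(k^g+1)m+r$ and using the representations $n=(m+kq+r)+k(k^{g-1}m-q)$ ties $a_1$ and $a_2$ to the \emph{same} block at two scales differing by the even power $k^{g-1}$, so the self-similarity guarantees simultaneously that both intervals have the same colour \emph{and} that they are positioned so that the $q$-range has length $\gg n$. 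To complete your argument you would have to reinstate exactly this positional correlation, at which point you are reproducing the paper's proof.
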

\begin{remark}
It can be seen that our new bound is sharp on the order of magnitude of $r_{1,k}(A,n)$ in the sense that
$$\limsup_{n\rightarrow\infty}\frac{r_{1,k}(A,n)}{n}\le 1/k.$$
Perhaps, it should be pointed out that the original argument of Qu \cite{Qu} with necessary adjustments can also lead to the bound given by Chen \cite{Chen1}.
It is also worth mentioning that our argument here leading to the sharp bound above in Theorem \ref{thm1} is different and simplified comparing with the somewhat complicated ones taken by Qu and Chen.
\end{remark}

\section{Proofs}
Following Qu \cite{Qu}, we may write $A$ as the following union of the `blocks'
$$A=\bigcup_{i=0}^{\infty}\left[t_{2i},t_{2i+1}\right),$$
where  $0\le t_0<t_1<t_2<\cdot\cdot\cdot$ is an increasing sequence of integers.
The proof of our theorem is based on the following lemma of Qu \cite[Lemma 2.1]{Qu}.

\begin{lemma}\label{lemma1}
Let $k\ge 2$ be a given integer. For any $A\in \Psi_k$ with $$A=\bigcup_{i=0}^{\infty}\left[t_{2i},t_{2i+1}\right),$$ there exist an odd positive integer $a$ and a nonnegative integer $i_0$ such that $t_{i+a}= kt_i$ for all $i\ge  i_0$.
\end{lemma}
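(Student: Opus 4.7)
The natural approach is via generating functions. Let $F(x)=\sum_{a\in A}x^{a}$ and $g(x)=\sum_{n\ge0}x^{n}=1/(1-x)$, so that the complement has generating function $g(x)-F(x)$ and $r_{1,k}(A,n)=[x^{n}]F(x)F(x^{k})$. The hypothesis $A\in\Psi_{k}$ says that $F(x)F(x^{k})-(g(x)-F(x))(g(x^{k})-F(x^{k}))$ agrees with some polynomial $P(x)$; expanding and simplifying, this becomes the identity
$$g(x)F(x^{k})+g(x^{k})F(x)-g(x)g(x^{k})=P(x).$$
Introducing the signed generating function $H(x)=2F(x)-g(x)$, whose $n$th coefficient is $+1$ if $n\in A$ and $-1$ otherwise, the equation rewrites symmetrically as
$$g(x)H(x^{k})+g(x^{k})H(x)=2P(x).$$

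Next I would exploit the block structure $A=\bigcup_{i\ge0}[t_{2i},t_{2i+1})$ to compute $(1-x)H(x)$. Since the coefficients of $H(x)$ change sign precisely at each $t_{i}$, telescoping gives
$$(1-x)H(x)=-1+2\sum_{i\ge0}(-1)^{i}x^{t_{i}},$$
with the $x\mapsto x^{k}$ substitute producing the analogue for $(1-x^{k})H(x^{k})$. Multiplying the functional equation through by $(1-x)(1-x^{k})$ (which absorbs the factors $g$) and dividing by $2$ should then yield the clean identity
$$\sum_{i\ge0}(-1)^{i}x^{t_{i}}+\sum_{i\ge0}(-1)^{i}x^{kt_{i}}=1+P(x)(1-x)(1-x^{k}).$$
The right-hand side is a polynomial, so the coefficient of $x^{n}$ on the left vanishes for all sufficiently large $n$.

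Finally I would unpack this vanishing. Since $(t_{i})$ and $(kt_{i})$ are strictly increasing, each large $n$ appears at most once in each sum. If some large $n=t_{i}$ were not also a value of $kt_{j}$, the left side would have nonzero coefficient $(-1)^{i}$, and the symmetric obstruction applies on the $kt_{j}$ side. Thus for all large indices $t_{i}$ coincides with some unique $kt_{j(i)}$, and cancellation $(-1)^{i}+(-1)^{j(i)}=0$ forces $i+j(i)$ odd. Matching the two strictly increasing sequences termwise from some threshold onwards now forces a constant index shift $a$ with $t_{i+a}=kt_{i}$ for every $i\ge i_{0}$; the inequality $kt_{i}>t_{i}$ gives $a\ge1$, and substituting $j=i-a$ into the parity condition shows $2i-a$ is odd, hence $a$ is odd.

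The principal obstacle is the generating-function bookkeeping in the first two steps: converting the convolutional identity into the clean polynomial identity for the signed sum $\sum(-1)^{i}x^{t_{i}}$ requires careful handling of the constant term and low-order coefficients, so that every low-degree discrepancy is absorbed cleanly into $P(x)(1-x)(1-x^{k})$. Once that identity is established, matching two strictly increasing sequences that coincide on their tails is routine.
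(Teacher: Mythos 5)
Your argument is correct, and it is essentially complete: the reduction to the identity
$$\sum_{i\ge0}(-1)^{i}x^{t_{i}}+\sum_{i\ge0}(-1)^{i}x^{kt_{i}}=1+P(x)(1-x)(1-x^{k})$$
checks out exactly as you describe (the constant terms do cancel cleanly, whether or not $t_{0}=0$), and the tail-matching of the two strictly increasing sequences, together with the parity of $i+j(i)$, correctly yields an odd constant shift $a$ with $t_{i+a}=kt_{i}$. Note, however, that the paper does not prove this lemma at all --- it quotes it verbatim as Lemma 2.1 of Qu \cite{Qu} --- so there is no in-paper proof to compare against; your generating-function derivation via the signed series $H(x)=2F(x)-g(x)$ is the standard technique in this literature (going back to Dombi, S\'andor and Yang--Chen) and is in the spirit of Qu's original argument. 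The only step you should write out fully in a final version is the bidirectional matching: you need both that every large $t_{i}$ is some $kt_{j}$ \emph{and} that every large $kt_{j}$ is some $t_{i}$ before you may conclude that the two tails are equal as sets and hence matched by a constant index shift; you flag this with ``the symmetric obstruction applies,'' which is the right observation, but it is doing real work and deserves a sentence of its own.
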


 \begin{proof}[Proof of theorem \ref{thm1}]
By Lemma \ref{lemma1} for $A\in \Psi_k$ with $$A=\bigcup_{i=0}^{\infty}\left[t_{2i},t_{2i+1}\right),$$ there exists an odd positive integer $a$ such that $t_{i+a}= kt_i$ for all $i\ge  i_0$. Without loss of generality, we can assume that $i_0=0$, otherwise one can consider $\widetilde{A}=A\setminus[0,t_{i_0})$ instead of $A$ (this can be seen from the proofs below).
 Let $T=4(t_{a+2}-t_0).$
Then there exists some odd integer $g\in \mathbb{N}$ such that $k^g>T$.

From now on, let $n$ be a sufficiently large number. It is clear that there are nonnegative integers $m$ and $r$ with $0\le r<(k^g+1)$ such that
$$n=(k^g+1)m+r.$$
We can assume that $m\in \left[k^st_\ell, k^st_{\ell+1}\right)$ for two nonnegative integers $s$ and $\ell$ with $$0\le \ell\le a-1.$$ 
Recall that the integer $n$ is assumed to be sufficiently large, it follows that both $m$ and $s$ are sufficiently large.
We will prove that
$$r_{1,k}(A,n)\ge \frac{n}{k^5t_a(k^g+2)}-(k^g+1),$$
from which it follows clearly that
$$\liminf_{n\rightarrow\infty}\frac{r_{1,k}(A,n)}{n}>0.$$
Since $r_{1,k}(A,n)=r_{1,k}(\mathbb{N}\setminus A,n)$ for all sufficiently large $n$, we can also assume, without loss of generality, that $$\left[k^st_\ell, k^st_{\ell+1}\right)\subseteq A.$$
Since $g$ is an odd integer, we make the observation that each of the following three intervals
$$\left[k^st_{\ell-2}, k^st_{\ell-1}\right), \quad \left[k^st_{\ell+2}, k^st_{\ell+3}\right) \quad \text{and} \quad \left[k^{s+g-1}t_\ell, k^{s+g-1}t_{\ell+1}\right)$$ contains in $A$ as well, which is crucial in the following arguments. Before the continuation of the proof, we make the following notice that for brevity we write, for example, $$k^3t_2, \quad k^4t_{2-a} \quad \text{and} \quad k^2t_{2+a}$$
as the same number at different occasions.
The proofs are divided into three cases:

{\bf Case I.} $k^st_\ell+k^{s-4}\le m<k^st_{\ell+1}-k^{s-4}.$
Noting that for any $0\le q< k^{s-5}-r$, we have
$$n=(m+kq+r)+k\left(k^{g-1}m-q\right),$$
where both $m+kq+r$ and $k^{g-1}m-q$ belong to $A$ since
$$m+kq+r\in \left[k^st_\ell, k^st_{\ell+1}\right) \quad \text{and} \quad k^{g-1}m-q\in \left[k^{s+g-1}t_\ell, k^{s+g-1}t_{\ell+1}\right).$$
Thus, we deduce that
\begin{align*}
r_{1,k}(A,n)&\ge k^{s-5}-r\\
&>\frac{m}{k^5t_{\ell+1}}-(k^g+1)\\
&\ge \frac{n-r}{k^5t_a(k^g+1)}-(k^g+1)\\
&\ge \frac{n}{k^5t_a(k^g+2)}-(k^g+1).
\end{align*}

{\bf Case II.} $k^st_\ell\le m<k^st_\ell+k^{s-4}$. For any $$k^{s-1}(t_{\ell}-t_{\ell-1})+k^{s-5}+r< q\le k^{s-1}(t_{\ell}-t_{\ell-2}),$$ it can be seen that $$m-kq+r\in \left[k^st_{\ell-2}, k^st_{\ell-1}\right)\subseteq A$$ 
and
$$k^{g-1}m+q\in \left[k^{s+g-1}t_\ell, k^{s+g-1}t_{\ell+1}\right)\subseteq A,$$
where the latter inclusion relation comes from the observation that $\left[k^{s+g-1}t_\ell, k^{s+g-1}t_{\ell+1}\right)$ contains in $A$ made previously and the
facts that
$$k^{g-1}m+q<k^{g-1}\left(k^st_\ell+k^{s-4}\right)+k^{s-1}(t_{\ell}-t_{\ell-2})\le k^{s+g-1}t_{\ell+1}$$
since $k^g>T\ge 2(t_{\ell}-t_{\ell-2}).$ Note that
\begin{equation*}
n=(m-kq+r)+k\left(k^{g-1}m+q\right)
\end{equation*}
for all these $q$, from which we conclude that
\begin{align*}
r_{1,k}(A,n)&\ge k^{s-1}(t_{\ell}-t_{\ell-2})-k^{s-1}(t_{\ell}-t_{\ell-1})-k^{s-5}-r\\
&\ge \frac{1}{2}k^{s-1}-r\\
&\ge\frac{n-r}{2kt_a(k^g+1)}-(k^g+1)\\
&\ge \frac{n}{2kt_a(k^g+2)}-(k^g+1).
\end{align*}

{\bf Case III.} $k^st_{\ell+1}-k^{s-4}\le m<k^st_{\ell+1}$. It can be verified directly that
$$m+kq+r\in \left[k^st_{\ell+2}, k^st_{\ell+3}\right)\subseteq A$$
and
$$k^{g-1}m-q\in \left[k^{s+g-1}t_\ell, k^{s+g-1}t_{\ell+1}\right)\subseteq A,$$
for any $$k^{s-1}(t_{\ell+2}-t_{\ell+1})+k^{s-5}\le q\le k^{s-1}(t_{\ell+3}-t_{\ell+1})-r$$
via similar arguments in {\bf Case II.} In fact,
$$k^{g-1}m-q\ge k^{g-1}\left(k^st_{\ell+1}-k^{s-4}\right)-k^{s-1}(t_{\ell+3}-t_{\ell+1})\ge k^{s+g-1}t_\ell$$
since $k^g>T\ge 2(t_{\ell+3}-t_{\ell+1})$. Note that
\begin{equation*}
n=(m+kq+r)+k\left(k^{g-1}m-q\right)
\end{equation*}
for all these $q$, from which we conclude that
\begin{align*}
r_{1,k}(A,n)&\ge k^{s-1}(t_{\ell+3}-t_{\ell+1})-k^{s-1}(t_{\ell+2}-t_{\ell+1})-k^{s-5}-r\\
&\ge \frac{1}{2}k^{s-1}-r\\
&\ge\frac{n-r}{2kt_a(k^g+1)}-(k^g+1)\\
&\ge \frac{n}{2kt_a(k^g+2)}-(k^g+1).
\end{align*}

This completes the proof of Theorem \ref{thm1}.
\end{proof}

\begin{acknowledgment}

The authors would like to thank the anonymous referee for his helpful comments.

The second author is supported by National
Natural Science Foundation of China under Grant No. 12201544, Natural Science Foundation of Jiangsu Province, China, Grant No. BK20210784, China Postdoctoral Science Foundation, Grant No. 2022M710121, the foundations of the projects "Jiangsu Provincial Double--Innovation Doctor Program'', Grant No. JSSCBS20211023 and "Golden  Phoenix of the Green City--Yang Zhou'' to excellent PhD, Grant No. YZLYJF2020PHD051.

The third author is supported by National
Natural Science Foundation of China under Grant No. 12101538.
\end{acknowledgment}

\end{document}